\theoremstyle{plain}
\newcounter{documentcounter}
\newtheorem{theorem}[documentcounter]{Theorem}
\newtheorem{proposition}[documentcounter]{Proposition}
\newtheorem{corollary}[documentcounter]{Corollary}
\newtheorem{lemma}[documentcounter]{Lemma}
\newenvironment{customthm}[1]
  {\innercustomthm}
  {\endinnercustomthm}
\theoremstyle{definition}
\newtheorem{definition}[documentcounter]{Definition}
\newtheorem*{definition*}{Definition}
\newtheorem{example}[documentcounter]{Example}
\newtheorem{problem}[documentcounter]{Problem}
\newcommand{\defn}[1]{\textbf{#1}}
\newcommand{\script}[1]{\mathcal{#1}}         
\newcommand{\defeq}{\coloneqq}                
\newcommand{\set}[2]{\left\{{#1}\,:\,{#2}\right\}} 
\newcommand{\card}[1]{\left|#1\right|}        
\newcommand{\ncard}[1]{\#{#1}}
\newcommand{\union}{\cup}                     
\newcommand{\intersect}{\cap}                 
\newcommand{\Union}{\bigcup}                  
\newcommand{\Intersect}{\bigcap}              
\newcommand{\ZZ}{\mathbb{Z}}                  
\newcommand{\RR}{\mathbb{R}}                  
\newcommand{\rcov}{r_{\textrm{c}}}            
\newcommand{\meet}{\wedge}                    
\newcommand{\join}{\vee}                      
\DeclareMathOperator{\cl}{cl}                 
\newcommand{\indep}{\script{I}}               
\newcommand{\bases}{\script{B}}               
\newcommand{\circuits}{\script{C}}            
\newcommand{\flats}{\script{F}}               
\DeclareMathOperator{\Act}{Act}               
\DeclareMathOperator{\EA}{EA}                 
\DeclareMathOperator{\EP}{EP}                 
\DeclareMathOperator{\IP}{IP}                 
\DeclareMathOperator{\lex}{lex}               
\newcommand{\clfn}{\sigma}                    
\DeclareMathOperator{\ex}{ex}                 
\newcommand{\K}{{\script{K}}}                 
\newcommand{\F}{\script{F}}                   
\newcommand{\trace}[2]{{{#1}\!:\!{#2}}}       
\DeclareMathOperator{\Ext}{Ext}               
\newcommand{\extDec}{_{\Ext}}                 
\newcommand{\clExt}{\clfn\extDec}             
\newcommand{\exExt}{\ex\extDec}               
\DeclareMathOperator{\leqext}{\leq\extDec}    
\begin{document}

\title{Convexity in ordered matroids \\ and the generalized external order}

\author{Bryan R.\ Gillespie\footnote{Email: \href{mailto:Bryan.Gillespie@colostate.edu}{\texttt{Bryan.Gillespie@colostate.edu}}}\\[1ex] Department of Mathematics\\ Colorado State University\\ Fort Collins, CO, U.S.A.}

\date{Aug 21, 2020}

\maketitle

\begin{abstract}
  In 1980, Las Vergnas defined a notion of discrete convexity for oriented matroids, which Edelman subsequently related to the theory of anti-exchange closure functions and convex geometries.  In this paper, we use generalized matroid activity to construct a convex geometry associated with an ordered, \emph{unoriented} matroid.  The construction in particular yields a new type of representability for an ordered matroid defined by the affine representability of its corresponding convex geometry.

  The lattice of convex sets of this convex geometry induces an ordering on the matroid independent sets which extends the external active order on matroid bases.  We show that this generalized external order forms a supersolvable meet-distributive lattice refining the geometric lattice of flats, and we uniquely characterize the lattices isomorphic to the external order of a matroid.  Finally, we introduce a new trivariate generating function generalizing the matroid Tutte polynomial.
\end{abstract}

\section{Introduction}

An \emph{anti-exchange closure function} on a finite set gives a discrete analog of the classical ``convex hull'' closure function in $\RR^n$.  This gives rise to the notion of a \emph{convex geometry}, which was independently discovered by Edelman \cite{edelman_meet-distributive_1980} and Jamison \cite{jamison_perspective_convexity_1982}.
In \cite{las_vergnas_convexity_1980}, Las Vergnas introduced an anti-exchange closure function associated with an oriented matroid, which was related to the theory of convex geometries by Edelman \cite{edelman_lattice_1982}.

In this work, we describe a parallel theory of convexity for unoriented matroids using a closure function related to the \emph{active orders} on matroid bases.
These orders were originally studied by Bj\"orner \cite{bjorner_homology_preprint_1979,bjorner_homology_1992} in relation to lexicographic shellings of matroid independence complexes, where the inclusion ordering of \emph{restriction sets} gives an ordering on the bases of a matroid, the \emph{internal active order}.
Additional work by Dawson \cite{dawson_collection_1984} and Purtill \cite[Ex.~7.5.c]{bjorner_homology_1992} studied the greedoid structure of these collections of restriction sets, and Las Vergnas \cite{las_vergnas_active_2001} showed that the internal order and its dual the \emph{external active order} admit a natural lattice structure by respectively extending with a formal maximal and minimal element.
The convexity theory developed in this work induces a lattice structure on the independent sets of an ordered matroid that canonically extends the external order on matroid bases.

Motivation for this work comes from the theory of \emph{zonotopal algebras} of Holtz and Ron \cite{holtz_zonotopal_2011}, where the external order has fundamental connections with the structure of the polynomial \emph{zonotopal spaces} of a representable matroid.
One example of this is the \emph{forward exchange matroids} of Lenz \cite{lenz_zonotopal_2016}, which provide a generalized setting for the construction of zonotopal spaces; here, the defining \emph{forward exchange property} of a collection of matroid bases is equivalent to the property of being an upper order ideal in $\Ext(M)$.
As another application, in \cite{gillespie_generalized_2018} the author uses the external order to describe the differential properties of Lenz's canonical basis of the zonotopal $\script{D}$-space \cite{lenz_zonotopal_2016}, which in particular leads to a direct algorithm to compute the polynomials in this basis.

We next summarize our main constructions and results.
The \emph{external active closure function} $\clExt$ is defined in \cite{las_vergnas_active_2001} in terms of the following generalization of the classical notion of matroid activity,
first introduced in dual form by Dawson \cite{dawson_collection_1984}.\footnote{See also \cite{bari_chromatic_polynomials_1979,gordon_generalized_1990}.}
Let $M = (E, \indep)$ be an ordered matroid with ground set $E$ and independent sets $\indep$, and let $A \subseteq E$.  An element $x \in E$ is called \emph{active} with respect to $A$ if there is a circuit $C \subseteq A \union \{x\}$ with $x = \min(C)$.
The elements of $E \setminus A$ that are active with respect to $A$ are called \emph{externally active}, and are denoted by $\EA(A)$.
The external active closure function is then defined by
\[
  \clExt(A) \defeq A \union \EA(A).
\]

Recalling that any closure function $\clfn$ has a corresponding \emph{extreme point function} given by $\ex(A) = \set{x \in A}{x \notin \clfn(A \setminus x)}$, we also define the \emph{external active extreme point function} $\exExt$ by
\[
  \exExt(A) \defeq {\max}_{\lex} \set{I \in \indep}{I \subseteq A \text{ and } \cl(I) = \cl(A)}.
\]
Our first result relates $\clExt$ and $\exExt$ to the theory of convex geometries as follows.

\begin{theorem}
  \label{thm:ext-closure-anti-exchange}
  The map $\clExt$ is an anti-exchange closure function, and its corresponding extreme point function is $\exExt$.
\end{theorem}

The identification of an anti-exchange closure function $\clExt$ and its associated convex geometry with any ordered matroid gives rise to a potentially interesting new kind of representability.  A convex geometry is called \emph{affine} if it may be derived from a standard anti-exchange closure function on points in Euclidean space (see Section \ref{subsec:background-conv-geoms}), and the question of classifying affine convex geometries in general is open, and thought to be difficult.  In the setting of the convex geometries assocated with ordered matroids, the question could be more tractible: which ordered matroids are \emph{convex representable}, in the sense that their associated convex geometry is affine?  For a brief discussion and additional references, see Section \ref{subsec:future-work-conv-geoms}.

Theorem \ref{thm:ext-closure-anti-exchange} in particular implies that the closed sets of $\clExt$ are in one-to-one correspondence with the independent sets of $M$, and we define the \emph{external order} $\Ext(M)$ on $\indep$ by
\[
  I \leqext J \text{ if and only if } \clExt(I) \subseteq \clExt(J).
\]
This order is a \emph{meet-distributive lattice} which can be seen to extend the external active order on bases by comparison with \cite{las_vergnas_active_2001} Proposition 3.1, omitting the formal minimal element added in that work.  In this lattice, the external active order on bases embeds as an upper order ideal, and the lattice meet satisfies $\clExt(I \meet J) = \clExt(I) \intersect \clExt(J)$.  We additionally show in Proposition \ref{prop:ext-order-others} that the external order is a refinement of the geometric lattice of flats of a matroid by the map $I \mapsto \cl(I)$, and is consistent with inclusion of independent sets and $\leq^*$-lexicographic order.

Figure \ref{fig:example-ext-order} gives an example of the external order associated with the ordered linear matroid represented by the columns of the matrix
\[
  X =
  \begin{bmatrix}
    1 & 0 & 1 & 1 \\
    0 & 1 & 1 & 0
  \end{bmatrix},
\]
where the numbers 1 through 4 indicate the column number, labeled from left to right.

\begin{figure}
  \centering
  \begin{minipage}[c]{.3\textwidth}
    \centering
    \begin{tikzpicture}[yscale=1, xscale=0.75]
      \tikzstyle{feas set style}=[inner sep=1mm, shape=rectangle]
      \node[feas set style] (p34) at ( 0,4) {1234};
      \node[feas set style] (p23) at (-1,3) {123};
      \node[feas set style] (p24) at ( 1,3) {124};
      \node[feas set style] (p13) at (-2,2) {13};
      \node[feas set style] (p12) at ( 0,2) {12};
      \node[feas set style] (pp4) at ( 2,2) {14};
      \node[feas set style] (pp3) at (-2,1) {3};
      \node[feas set style] (pp1) at ( 0,1) {1};
      \node[feas set style] (pp2) at ( 2,1) {2};
      \node[feas set style] (ppp) at ( 0,0) {$\emptyset$};
      \draw (p34) -- (p23);
      \draw (p34) -- (p24);
      \draw (p23) -- (p13);
      \draw (p23) -- (p12);
      \draw (p24) -- (p12);
      \draw (p24) -- (pp4);
      \draw (p13) -- (pp3);
      \draw (p13) -- (pp1);
      \draw (p12) -- (pp1);
      \draw (p12) -- (pp2);
      \draw (pp4) -- (pp1);
      \draw (pp3) -- (ppp);
      \draw (pp1) -- (ppp);
      \draw (pp2) -- (ppp);
      \node at (0,4.8) {Convex Sets};
    \end{tikzpicture}
  \end{minipage}
  \begin{minipage}[c]{.3\textwidth}
    \centering
    \begin{tikzpicture}[yscale=1, xscale=0.75]
      \tikzstyle{feas set style}=[inner sep=1mm, shape=rectangle]
      \node[feas set style] (p34) at ( 0,4) {\textbf{34}};
      \node[feas set style] (p23) at (-1,3) {\textbf{23}};
      \node[feas set style] (p24) at ( 1,3) {\textbf{24}};
      \node[feas set style] (p13) at (-2,2) {\textbf{13}};
      \node[feas set style] (p12) at ( 0,2) {\textbf{12}};
      \node[feas set style] (pp4) at ( 2,2) {4};
      \node[feas set style] (pp3) at (-2,1) {3};
      \node[feas set style] (pp1) at ( 0,1) {1};
      \node[feas set style] (pp2) at ( 2,1) {2};
      \node[feas set style] (ppp) at ( 0,0) {$\emptyset$};
      \draw[thick] (p34) -- (p23);
      \draw[thick] (p34) -- (p24);
      \draw[thick] (p23) -- (p13);
      \draw[thick] (p23) -- (p12);
      \draw[thick] (p24) -- (p12);
      \draw (p24) -- (pp4);
      \draw (p13) -- (pp3);
      \draw (p13) -- (pp1);
      \draw (p12) -- (pp1);
      \draw (p12) -- (pp2);
      \draw (pp4) -- (pp1);
      \draw (pp3) -- (ppp);
      \draw (pp1) -- (ppp);
      \draw (pp2) -- (ppp);
      \node at (0,4.8) {Independent Sets};
    \end{tikzpicture}
  \end{minipage}
  \caption{Lattice of convex sets of the external active closure function $\clExt$, and the corresponding external order $\leqext$ on independent sets; the external order on matroid bases appears in bold.}
  \label{fig:example-ext-order}
\end{figure}

We develop the following characterization of the lattices derived from the external order.

\begin{theorem}
  \label{thm:ext-order-char}
  A finite lattice $L$ is isomorphic to the external order $\leqext$ of an ordered matroid if and only if it is meet-distributive, supersolvable, and has increasing and submodular covering rank function.
\end{theorem}

Here, the the covering rank function of a lattice $L$ is defined for $x \in L$ as the number of elements in $L$ covered by $x$.  The condition of lattice supersolvability in particular arises because of the need for a certain ordering consistency with respect to the ground set of the underlying matroid (see Example \ref{ex:matroidal-not-ext-order});
it is derived by relating the external order with the theory of \emph{supersolvable antimatroids} developed by Armstrong \cite{armstrong_sorting_2009}.

The origin of the active orders in the theory of matroid shellings suggests the potential for connections with the area of topological combinatorics.  One result, given by Proposition \ref{prop:bc-complex-embedding}, is that the \emph{broken circuit complex} of an ordered matroid is embedded in $\Ext(M)$ as a lower order ideal, with facets given by the $\leqext$-minimal bases of $M$.
An interesting application following from results for general convex geometries is the following partition of the Boolean lattice, which refines the partition of Crapo \cite{crapo_tutte_1969} over matroid bases.
\begin{theorem}
  \label{thm:crapo-type-partition}
  Let $M$ be an ordered matroid.  Then the intervals
  \[
    [I, I \union \EA(I)], \hspace{2mm} I \in \indep(M)
  \]
  form a partition of the Boolean lattice $2^{E(M)}$, and this partition is a refinement of the classical partition of Crapo.
\end{theorem}
The first statement above was derived in complemented form by Dawson \cite{dawson_collection_1984}, but we give an original proof here, as well as providing the additional result that the partition refines that of Crapo.  Motivated by this partition result, we define a new trivariate generating function generalizing the matroid Tutte polynomial, the \emph{external structure polynomial} $P_M$ of $M$, by
\[
  P_M(x, y, z) \defeq \sum_{I \in \indep(M)} x^{r(M) - \card{I}} y^{\card{\EA(I)}} z^{\card{\EA(B_I) \setminus \EA(I)}}.
\]
In the above, $B_I$ denotes the unique basis of $M$ such that $\IP(B_I) \subseteq I \subseteq B_I \union \EA(B_I)$, whose existence follows from Crapo's partition.  For additional discussion, see Section \ref{subsec:open-top-combinatorics}.

The rest of the paper is organized as follows.  Section 2 gives a brief overview of necessary background material on closure functions, matroids, convex geometries, and antimatroids.  Section 3 constructs the generalized external order and applies results from the theory of convex geometries to describe its structure.  Section 4 discusses matroidal closure systems, and characterizes the lattices isomorphic to the external order of an ordered matroid in terms of matroidal closure systems and supersolvable antimatroids.  In Section 5, we discuss open problems and potential directions for future research.

\section{Background}

We briefly review relevant background and notation, and refer the reader to standard sources for additional details.  We begin with notation for some of the basic objects of our discussion, set systems and closure functions.

\begin{definition}
  A \defn{set system} is a pair $(E, \script{S})$, where $E$ is a finite set and $\script{S}$ is a nonempty collection of subsets of $E$.  We will sometimes refer to $\script{S}$ as a set system when the ground set is understood.
\end{definition}

We will adopt the common notation of using a string of lower-case letters or numbers to refer to a small finite set.  For instance, if $x, y \in E$ are elements of a ground set, then the string $xy$ denotes the set $\{x, y\}$.  If $A \subseteq E$, then $A \union xy$ denotes the set $A \union \{x, y\}$.

If $A$ is a set and $P$ is a logical statement valid for the elements of $A$, then $A_P$ denotes the set $\set{x \in A}{P \text{ holds for } x}$.  For instance, if $A$ consists of elements ordered by an ordering $\leq$ and $y \in A$, then $A_{\leq y}$ denotes the set $\set{x \in A}{x \leq y}$.

\begin{definition}
  Let $E$ be a finite set, and let $\clfn : 2^E \to 2^E$.  Then $\clfn$ is called a \defn{closure function} if, for every $A, B \subseteq E$, it is:
  \begin{itemize}
    \item Extensive: $A \subseteq \clfn(A)$.
    \item Increasing: if $A \subseteq B$, then $\clfn(A) \subseteq \clfn(B)$.
    \item Idempotent: $\clfn(\clfn(A)) = \clfn(A)$.
  \end{itemize}
  A set $A \subseteq E$ is called \defn{closed} or \defn{convex} with respect to $\clfn$ if $\clfn(A) = A$.
\end{definition}

Closely related to closure functions is a class of set systems called \emph{Moore families}.

\begin{definition}
  Let $(E, \K)$ be a set system.  Then $\K$ is called a \defn{Moore family} if $\K$ contains $E$ and is closed under set intersections.
\end{definition}

Closure functions and Moore families for a set $E$ are equivalent under the following correspondence.  For a closure function $\clfn$, the collection $\K_{\clfn}$ of closed sets of $\clfn$ gives a Moore family, and inversely, for a Moore family $\K$, the mapping
\[
  \clfn_{\K} : A \mapsto \Intersect \set{K \in \K}{A \subseteq K}
\]
is a closure function.

A tuple $(E, \K, \clfn)$ is called a \defn{closure system} if $\K$ is a Moore family on the finite set $E$, and $\clfn = \clfn_{\K}$.  Often we will refer to this tuple and the underlying Moore family $\K$ interchangeably, in which case $E$ and $\clfn$ will denote the underlying finite set and corresponding closure function.

We call a closure system $\K$ \defn{reduced} if the set $K_0 = \clfn(\emptyset) = \Intersect_{K \in \K} K$ is empty.  If $\K$ is not reduced, then the Moore family $\set{K \setminus K_0}{K \in \K}$ is a reduced closure system which is structurally equivalent to $\K$.  For convenience we will assume from now on that all closure systems are reduced, unless noted otherwise.  Results stated in these terms generally are simple to extend to the non-reduced case.

The partial ordering of a closure system $\K$ under set inclusion forms a \emph{lattice}, with meet and join given by $K \meet K' = K \intersect K'$ and $K \join K' = \clfn(K \union K')$.  Throughout, we will assume familiarity with partial orders and lattices, as developed in \cite{stanley_enumerative_2011} Chapter 3.

If $A \subseteq E$, a point $x \in A$ is called an \defn{extreme point} of $A$ if $x \notin \clfn(A \setminus x)$, and the set of such points is denoted $\ex(A)$.  The extreme point map is in general idempotent, but not necessarily increasing.  We call a set $A$ an \defn{independent} set if $\ex(A) = A$, and we write $\indep(\K)$ for the collection of independent sets of $\K$, which in particular is closed under taking subsets.  A set system closed under taking subsets in this way is called a \defn{simplicial complex}.

\subsection{Matroids}

We review briefly the most relevant background on the topic of matroids, but assume general familiarity with the theory, including the definitions and relations between various cryptomorphic axiom systems.  For a comprehensive development, see \cite{oxley_matroid_2011}, and for background on matroid activity, see \cite{bjorner_homology_1992}.

\begin{definition}
  A set system $M = (E, \indep)$ is called a \defn{matroid} if:
  \begin{itemize}
    \item For every $I \in \indep$, if $J \subseteq I$, then $J \in \indep$.
    \item For every $I, J \in \indep$, if $\card{I} > \card{J}$, then there exists $x \in I \setminus J$ such that $J \union x \in \indep$.
  \end{itemize}
  A set in $\indep$ is called an \defn{independent set} of the matroid $M$.
\end{definition}

The above conditions are respectively called the \defn{hereditary} and \defn{independence} axioms for matroids, and by the first of these, the independent sets of a matroid form a simplicial complex.

Let $M = (E, \indep)$ be a matroid.  A \defn{basis} of $M$ is a maximal independent set, and a \defn{circuit} of $M$ is a minimal dependent set.  The \defn{rank function} of $M$ is the function $r : 2^E \to \ZZ_{\geq 0}$ given by
\[
  r(A) \defeq \max \set{\card{I}}{I \in \indep, I \subseteq A}.
\]

The \defn{matroid closure function} $\cl$ of $M$ is defined by
\[
  \cl(A) \defeq \set{x \in E}{r(A \union x) = r(A)},
\]
and a set closed with respect to $\cl$ is called a \defn{flat} of $M$.  The flats of a matroid $M$ form a \emph{geometric lattice} under set inclusion, and any geometric lattice determines the lattice of flats of a unique simple matroid.

The matroid closure function in particular satisfies the \emph{Steinitz-Mac Lane exchange property}: if $F \subseteq E$ with $\cl(F) = F$ and $x, y \in E \setminus F$, then $y \in \cl(F \union x)$ implies $x \in \cl(F \union y)$.
If $\flats$ denotes the closure system associated with $\cl$, then the matroid independent sets $\indep$ can be recovered by $\indep = \indep(\flats)$.  More generally, any closure function satisfying the Steinitz-Mac Lane exchange property defines a matroid in this way, so matroids can equivalently be defined in terms of such closure functions.

For notation, we denote the ground set of a matroid $M$ by $E(M)$, and the collections of independent sets, bases, circuits, and flats of $M$ are denoted respectively by $\indep(M)$, $\bases(M)$, $\circuits(M)$, and $\flats(M)$.

A matroid $M$ is called an \defn{ordered matroid} if its ground set is equipped with a total ordering.  An ordering on the ground set allows the definition of the important notion of \emph{matroid activity}.  Classically this is defined for matroid bases using \emph{fundamental circuits}, but we will use a generalization to arbitrary subsets of $E(M)$, described in the following form by Las Vergnas \cite{las_vergnas_active_2001}.

\begin{definition}
  Let $M$ be an ordered matroid, and let $A \subseteq E(M)$.  An element $x \in E(M)$ is called \defn{$\mathbf{M}$-active} with respect to $A$ if there is a circuit $C$ of $M$ with smallest element $x$ such that $C \subseteq A \union x$.  The set of $M$-active elements of $A$ is denoted by $\Act_M(A)$.

  An element in $E(M) \setminus A$ which is $M$-active with respect to $A$ is called \defn{externally active}, and otherwise is called \defn{externally passive}.  We denote the collections of externally active and externally passive elements by $\EA_M(A)$ and $\EP_M(A)$.
\end{definition}

\subsection{Convex Geometries}
\label{subsec:background-conv-geoms}

In light of the characterization of matroids in terms of closure functions, a matroid can be viewed as a closure system whose closure function satisfies the Steinitz-Mac Lane exchange property.  We now review the related class of closure systems, the \emph{convex geometries}, which are characterized by closure functions instead satisfying an \emph{anti-exchange} property motivated by convex hulls in Euclidean space.  For additional details and examples, see \cite{edelman_theory_1985} and its references.

\begin{definition}
  Let $(E, \K, \clfn)$ be a closure system.  The closure function $\clfn$ is called \defn{anti-exchange} if for every $K \in \K$ and all distinct points $x, y \notin K$ we have that $y \in \clfn(K \union x)$ implies $x \notin \clfn(K \union y)$.  A closure system whose closure function is anti-exchange is called a \defn{convex geometry}.
\end{definition}

Convex geometries relate to the convex hulls of points in Euclidean space in the following way: if $E$ is a finite collection of points in $\RR^n$, then the function mapping a subset of $E$ to the intersection of its convex hull with $E$ is an anti-exchange closure function.  While not all convex geometries can be presented in terms of this construction, this example provides helpful geometric intuition.  A convex geometry is called \defn{affine} if it arises from points in Euclidean space in this way.

The following proposition, which restates results from \cite{edelman_theory_1985} Theorems 2.1 and 2.2 and \cite{monjardet_duality_2001} Theorem 2, gives several equivalent characterizations of convex geometries which will be useful for our discussion of the matroid external order in Section \ref{sec:ext-order}.

\begin{proposition}
  \label{prop:convex-geometries}
  If $(E, \K, \clfn)$ is a closure system, then the following are equivalent:
  \begin{enumerate}
    \item $\clfn$ is anti-exchange.
    \item \label{p:closure-from-extreme-points} For every set $A \subseteq E$, $\clfn(A) = \clfn(\ex(A))$.
    \item \label{p:ex-of-closure} For every set $A \subseteq E$, $\ex(A) = \ex(\clfn(A))$.
    \item \label{p:ex-sigma-inverses} For every convex set $K$, $K = \clfn(\ex(K))$.
    \item \label{p:ex-injective} The restriction of $\ex$ to $\K$ is injective.
    \item \label{p:closure-interval} For every set $A \subseteq E$, $\clfn^{-1}(\clfn(A)) = [\ex(A), \clfn(A)]$.
    \item \label{p:ex-interval} For every set $A \subseteq E$, $\ex^{-1}(\ex(A)) = [\ex(A), \clfn(A)]$.
  \end{enumerate}
\end{proposition}

The following properties of convex geometries will be useful later, and are derived from the above characterizations.

\begin{proposition}
  \label{prop:conv-geom-additional}
  Let $(E, \K, \clfn)$ be a convex geometry.  Then:
  \begin{enumerate}
    \item \label{p:ex-cl-inverses} The extreme point map $\ex$ is a bijection from $\K$ to $\indep(\K)$ with inverse $\clfn$.
    \item For every $A, B \subseteq E$, $\clfn(A) = \clfn(B)$ if and only if $\ex(A) = \ex(B)$.
    \item \label{p:cg-crapo-partition} The intervals $[\ex(K), K], K \in \K$ form a partition of the Boolean lattice.
  \end{enumerate}
\end{proposition}

\begin{proof}
  Since the extreme point map is idempotent, $\ex(K)$ is independent for every $K \in \K$, and $\ex$ is well-defined as a map from $\K$ to $\indep(\K)$.
  The fact that $\ex$ is bijective with inverse $\clfn$ follows from
  Proposition \ref{prop:convex-geometries}, Parts \ref{p:ex-sigma-inverses}, \ref{p:ex-of-closure} and \ref{p:ex-injective}.

  For the second part, if $A \subseteq E$, then by Proposition \ref{prop:convex-geometries}, Parts \ref{p:closure-interval} and \ref{p:ex-interval}, the subsets with closure equal to $\clfn(A)$ are the subsets in the interval $[\ex(A), \clfn(A)]$, and this interval likewise gives the subsets with extreme points equal to $\ex(A)$.

  For the last part, note that by the above, the relation ``$A \sim B$ if $\clfn(A) = \clfn(B)$'' is an equivalence relation, and the equivalence class of a set $A$ is the interval $[\ex(A), \clfn(A)]$.  Each equivalence class contains a unique convex set, so the equivalence classes of $\sim$ are indexed by $K \in \K$.
\end{proof}

Convex geometries additionally can be characterized in terms of their lattices of convex sets under set inclusion.

\begin{definition}
  Let $L$ be a lattice.  Then $L$ is called \defn{meet-distributive} if whenever $v \in L$ and $u$ is the meet of all elements covered by $v$, then the interval $[u, v]$ is isomorphic to a Boolean lattice.
\end{definition}

\begin{proposition}[\cite{edelman_meet-distributive_1980}, Thm.~3.3]
  \label{prop:meet-dist-lat}
  A finite lattice $L$ is isomorphic to the lattice of convex sets of a convex geometry under set inclusion if and only if $L$ is meet-distributive.
\end{proposition}

In particular, given a meet-distributive lattice $L$, there is a canonical construction for a corresponding reduced convex geometry with ground set given by the \emph{join-irreducible} elements of $L$.  In Section \ref{sec:ext-ord-char}, we will sometimes refer to the convex sets of a convex geometry in lattice theoretic terms reflecting this correspondence.
See \cite{edelman_meet-distributive_1980} for additional details on meet-distributive lattices and their relation to convex geometries.

\subsection{Antimatroids}

We will additionally need background on the topic of \emph{antimatroids}, a class of greedoids which are essentially equivalent to convex geometries.  For a more extensive overview, see \cite{bjorner_introduction_1992} Section 8.7, and for a different perspective, see \cite{dietrich_matroids_1989}.

\begin{definition}
  A set system $(E, \F)$ is called an \defn{antimatroid} if:
  \begin{itemize}
    \item $\emptyset \in \F$.
    \item For every $A, B \in \F$, if $B \nsubseteq A$, then there is an $x \in B \setminus A$ such that $A \union x \in \F$.
  \end{itemize}
  The sets in $\F$ are called its \defn{feasible sets}.
\end{definition}

\begin{proposition}[\cite{bjorner_introduction_1992}, Prop.~8.7.3]
  A set system $(E, \F)$ is an antimatroid if and only if the complementary set system $\set{E \setminus F}{F \in \F}$ is a convex geometry.
\end{proposition}

Correspondingly, a finite lattice is the lattice of feasible sets of an antimatroid if and only if its order dual is meet-distributive.  Such lattices are called \defn{join-distributive}.

We will in particular make use of two important structures of antimatroids, their \emph{independent sets} and their \emph{rooted circuits}.  For a set system $(E, \F)$ and $A \subseteq E$, define the \defn{trace} $\trace{\F}{A}$ of $A$ in $\F$ to be the collection $\set{ F \intersect A }{ F \in \F }$.

\begin{definition}
  Let $(E, \F)$ be an antimatroid.  A set $A \subseteq E$ is called \defn{independent} if $\trace{\F}{A} = 2^A$.  A set which is not independent is called \defn{dependent}, and a minimal dependent set is called a \defn{circuit}.  We denote the collection of independent sets of $\F$ by $\indep(\F)$.
\end{definition}

The definition of independent sets for an antimatroid is equivalent to that of its corresponding convex geometry.

\begin{proposition}[\cite{bjorner_introduction_1992}, Prop.~8.7.9]
  Let $(E, \F)$ be an antimatroid, and let $\ex$ be the extreme point function of its corresponding convex geometry $\K$.  Then $A \subseteq E$ satisfies $\trace{\F}{A} = 2^A$ if and only if $A = \ex(K)$ for some convex set $K \in \K$.  In particular, $\indep(\F) = \indep(\K)$.
\end{proposition}

If $A$ is a set and $x \in A$, the tuple $(A, x)$ is called a \defn{rooted set} with \defn{root} $x$.  The following describes how any circuit of an antimatroid can be assigned a canonical root.

\begin{proposition}[\cite{bjorner_introduction_1992}, Sec.~8.7.C]
  \label{prop:rooted-circuits-def}
  Let $(E, \F)$ be an antimatroid and let $C \subseteq E$ be a circuit of $\F$.  Then there is a unique element $x \in C$ such that $\trace{\F}{C} = 2^C \setminus \{\{x\}\}$.  We call the rooted set $(C, x)$ a \defn{rooted circuit} of $\F$, and denote the rooted circuits of $\F$ by $\circuits(\F)$.
\end{proposition}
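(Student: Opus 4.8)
The plan is to reduce the statement for a circuit $C$ to a purely local problem about the trace $\script{A} \defeq \trace{\F}{C}$, and then to pin down its missing subsets using each of the three antimatroid axioms in turn. First I would record that $\script{A}$ is itself an antimatroid on the ground set $C$: it contains $\emptyset$, it is closed under unions since $(X \intersect C) \union (Y \intersect C) = (X \union Y) \intersect C$ with $X \union Y \in \F$, and it is accessible (the trace of an antimatroid is again an antimatroid, which one verifies against the three conditions of the preceding proposition). Since the ground set is feasible we have $C = E \intersect C \in \script{A}$, and because $C$ is dependent we have $\script{A} \neq 2^C$. Finally, minimality of the circuit $C$ says every proper subset $D \subsetneq C$ is free, and since traces compose, $\trace{(\trace{\F}{C})}{D} = \trace{\F}{D}$ for $D \subseteq C$, this freeness is detected inside $\script{A}$. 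Thus it suffices to show: if $\script{A}$ is an antimatroid on $C$ with $\emptyset, C \in \script{A}$, with $\script{A} \neq 2^C$, and with every proper subset of $C$ free, then exactly one subset of $C$ fails to lie in $\script{A}$, and it is a singleton.

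Next I would locate the root. For existence, observe that if every singleton $\{x\} \subseteq C$ were feasible, union-closure would force every subset of $C$ into $\script{A}$, contradicting $\script{A} \neq 2^C$; hence some singleton $\{a\}$ is missing. For uniqueness, suppose $\{a\}$ and $\{b\}$ were both missing with $a \neq b$. Since $C \setminus a$ is a proper subset it is free, so $\{b\} \in \trace{\script{A}}{C \setminus a}$, i.e.\ there is a feasible $X$ with $X \intersect (C \setminus a) = \{b\}$; as $X \subseteq C$ this forces $X \subseteq \{a,b\}$ and $b \in X$, and since $\{b\} \notin \script{A}$ we get $\{a,b\} \in \script{A}$. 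But then accessibility applied to $\{a,b\}$ would require one of $\{a\}, \{b\}$ to be feasible, a contradiction. Hence the missing singleton $a$ is unique, and in particular every other singleton is feasible.

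The heart of the argument, and the step I expect to be the main obstacle, is showing that no larger set is missing, so that $\script{A} = 2^C \setminus \{\{a\}\}$ exactly. For a subset $S \subseteq C$ with $a \notin S$, each of its singletons is feasible, so $S$ is a union of feasible sets and hence feasible. The delicate case is $a \in S$ with $S \neq \{a\}$, where a naive attempt to realize $S$ as a trace only produces a feasible \emph{superset} of $S$. The trick I would use is to realize the \emph{singleton} $\{a\}$ inside the free set $T \defeq (C \setminus S) \union \{a\}$, which is a proper subset of $C$ because $S \setminus a \neq \emptyset$: freeness of $T$ gives a feasible $X$ with $X \intersect T = \{a\}$, and since $X \subseteq C$ this forces $X \intersect (C \setminus S) = \emptyset$, i.e.\ $a \in X \subseteq S$. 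Then $S = (S \setminus a) \union X$ exhibits $S$ as a union of two feasible sets (the first feasible because it avoids $a$), so $S \in \script{A}$ by union-closure.

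Putting these together, the only subset of $C$ outside $\script{A}$ is $\{a\}$, which is exactly the assertion $\trace{\F}{C} = 2^C \setminus \{\{a\}\}$ with $a$ determined uniquely. The subtle points to get right are the reduction in the first paragraph --- in particular that the trace of an antimatroid is again an antimatroid, and that freeness transfers between $\F$ and $\script{A}$ --- and the choice of the auxiliary free set $T$ in the completeness step, which is what converts minimality of the circuit (freeness of all proper subsets) into the strong conclusion that only a single singleton is absent.
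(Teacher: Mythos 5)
Your argument is correct, and there is nothing in the paper to compare it against: the proposition is quoted from Bj\"orner and Ziegler without proof, so your write-up is a genuinely self-contained verification. The overall structure is sound and efficient --- pass to the trace $\script{A} = \trace{\F}{C}$, show exactly one singleton is missing (existence by union-closure, uniqueness by producing $\{a,b\} \in \script{A}$ and applying accessibility to it), and then the completeness step, where the auxiliary free set $T = (C \setminus S) \union \{a\}$ is exactly the right device for converting minimality of the circuit into the conclusion that $\{a\}$ is the \emph{only} absent subset. The key transfer facts you rely on (traces compose, so freeness of proper subsets is detected inside $\script{A}$; the trace is union-closed) are verified correctly.

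Two blemishes are worth fixing. First, the assertion \enquote{the ground set is feasible} is false in general: the paper explicitly allows antimatroids with loops, in which case $E \notin \F$; indeed, a loop $x$ makes $\{x\}$ itself a circuit with $C = \{x\} \notin \script{A}$. Fortunately the hypothesis $C \in \script{A}$ is never used in your proof of the reduced claim --- your completeness step \emph{derives} $C \in \script{A}$ whenever $\card{C} \geq 2$, and the singleton-loop case is handled trivially by the remaining steps --- so you should simply delete that clause rather than repair it. Second, accessibility of the trace is asserted rather than proved, and it is the one ingredient that genuinely needs an argument: take $X \in \F$ with $X \intersect C = Y$ of minimal cardinality and apply accessibility of $\F$; minimality forces the removable element to lie in $C$. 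Alternatively you can cite machinery already quoted in the paper: antimatroid deletion $\F \mdelete A$ equals the trace $\trace{\F}{(E \setminus A)}$, and antimatroid deletion of an antimatroid is again an antimatroid, so $\trace{\F}{C} = \F \mdelete (E \setminus C)$ is an antimatroid. With those two repairs the proof is complete.
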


The rooted circuits of an antimatroid in particular determine its feasible sets in the following way.

\begin{proposition}[\cite{bjorner_introduction_1992}, Prop.~8.7.11]
  \label{prop:rt-circuits-characterize}
  Let $(E, \F)$ be an antimatroid and $A \subseteq E$.  Then $A$ is feasible if and only if $C \intersect A \neq \{x\}$ for every rooted circuit $(C, x)$.
\end{proposition}

\section{Convex Geometry of the Active Closure Function}
\label{sec:ext-order}

Let $M$ be an ordered matroid.  The following set functions will play a central role in the main construction of this paper, the generalized external order.  We will usually work with only a single underlying matroid, so we will often streamline our notation by omitting the matroid $M$, as long as there is no risk of confusion.

\begin{definition}
  Let $M$ be an ordered matroid.  For $A \subseteq E(M)$, define the \defn{external active closure function} $\clExt^M$ by
  \[
    \clExt^M : A \mapsto A \union \EA_M(A) = A \union \Act_M(A),
  \]
  and let the \defn{external active extreme point function} $\exExt^M$ map $A$ to its lexicographically maximal spanning independent subset.
\end{definition}

Las Vergnas defined $\clExt$ in \cite{las_vergnas_active_2001}, and showed that it is a closure function.  We begin by proving Theorem \ref{thm:ext-closure-anti-exchange}, which refines this characterization and connects the matroid active orders with the theory of finite convex geometries.

\begin{customthm}{\ref{thm:ext-closure-anti-exchange}}
  The map $\clExt$ is an anti-exchange closure function, and its corresponding extreme point function is $\exExt$.
\end{customthm}

\begin{proof}
  As noted, $\clExt$ is shown to be a closure function in \cite{las_vergnas_active_2001} Proposition 2.2.  To see that it is anti-exchange, let $A \subseteq E(M)$, and suppose that $x, y \in E(M)$ with $x \neq y$ and $x, y \notin \clExt(A)$.  In particular, this implies $x, y \notin A$.

  If $y \in \clExt(A \union x)$, then $y$ must be $M$-active with respect to $A \union x$, and so $y$ is the smallest element of some circuit $C \subseteq A \union xy$.  However, we know that $y \notin \Act(A)$ since $y \notin \clExt(A)$, so we see that $C \nsubseteq A \union y$.  We conclude that $x \in C$, and since $y$ is the smallest element of $C$, that $y < x$ in the ordering of $M$.

  If it were also true that $x \in \clExt(A \union y)$, then by the same argument we could show $x < y$.  We can thus conclude $x \notin \clExt(A \union y)$, and so $\clExt$ is anti-exchange.


  Now let $A \subseteq E(M)$, and recall that the extreme point function of $\clExt$ is defined by
  \[
    \ex(A) \defeq \set{x \in A}{x \notin \clExt(A \setminus x)}.
  \]
  Let $I = \exExt(A)$, the lexicographically maximal spanning independent subset of $A$.  Equivalently,
  \[
    I = \set{x \in A}{x \notin \cl( A_{>x} )}.
  \]

  Let $x \in A$, and suppose first that $x \notin \ex(A)$.  Then $x \in \clExt(A \setminus x)$, so $x$ is $M$-active with respect to $A \setminus x$, and thus $x$ is the smallest element of a circuit $C \subseteq A$.  In particular, $x \in \cl(C \setminus x)$ where $C \setminus x \subseteq A_{>x}$, and this implies that $x \notin I$.

  Now suppose $x \notin I$.  Then $x \in \cl( A_{>x} )$, so if $J \subseteq A_{>x}$ is a minimal subset with $x \in \cl(J)$, then $J \union x$ is a circuit in $A$ with smallest element $x$.  Thus $x$ is $M$-active with respect to $A \setminus x$, so $x \in \clExt(A \setminus x)$, and this implies that $x \notin \ex(A)$.
  We conclude $I = \ex(A)$ as desired.
\end{proof}

In particular, by application of Theorem \ref{thm:ext-closure-anti-exchange} and Proposition \ref{prop:convex-geometries}, we conclude the following.
\begin{corollary}
  \label{cor:ext-order-structure}
  If $M$ is an ordered matroid and $\K(M)$ is the closure system defined by $\clExt$, then:
  \begin{enumerate}
    \item \label{p:ext-order-conv-geom} $\K(M) = \set{I \union \EA(I)}{I \in \indep(M)}$ is a convex geometry on ground set $E(M)$.
    \item \label{p:ind-closed-bij} The independent sets of $\K(M)$ are $\indep(M)$, and are in bijection with the closed sets $\K(M)$ by the map $\clExt$.
    \item \label{p:convex-intersections} If $I, I' \in \indep(M)$, then there exists $J \in \indep(M)$ with
    \[
      J \union \EA(J) = (I \union \EA(I)) \intersect (I' \union \EA(I')).
    \]
  \end{enumerate}
\end{corollary}

\begin{proof}
  For Part \ref{p:ext-order-conv-geom}, note that the set $I \union \EA(I)$ for $I \in \indep(M)$ is the external active closure of $I$, and thus is convex in $\K(M)$.
  Likewise, if $K$ is convex, then by Proposition \ref{prop:convex-geometries}, Part \ref{p:closure-from-extreme-points}, $K = \clExt(K) = \clExt( \exExt(K) )$ is the image of the independent set $\exExt(K) \in \indep(M)$.

  For Part \ref{p:ind-closed-bij}, $\clExt$ is a bijection from $\indep(\K(M))$ to $\K(M)$ by Proposition \ref{prop:conv-geom-additional}, Part \ref{p:ex-cl-inverses}.
  In particular, $\indep(\K(M))$ is given by $\exExt(2^{E(M)})$, which is equal to $\indep(M)$ as desired.

  Part \ref{p:convex-intersections} follows from Part \ref{p:ext-order-conv-geom} and the fact that $\K(M)$ is a closure system.
\end{proof}

We can now define the \emph{external order} on the independent sets of an ordered matroid, which in particular extends the classical external order on matroid \emph{bases}, as defined in \cite{las_vergnas_active_2001}.  To simplify notation when working with the external order and its associated convex geometry, we will use $\clfn$ and $\ex$ to denote $\clExt$ and $\exExt$.  As usual, $\cl$ will denote the standard matroid closure function.

\begin{definition}
  If $M$ is an ordered matroid, define the \defn{external order} $\Ext(M)$ as the partial order $(\indep(M), \leqext)$, where $I \leqext J$ if and only if $I \union \EA(I) \subseteq J \union \EA(J)$.
\end{definition}

\begin{theorem}
  \label{thm:ext-order-lattice}
  Let $M$ be an ordered matroid.  Then the external order $\Ext(M)$ on $\indep(M)$ is a meet distributive lattice.  For every $I, J \in \indep(M)$:
  \begin{itemize}
    \item $I \meet J = \ex( \clfn(I) \intersect \clfn(J) )$, and $\clfn(I \meet J) = \clfn(I) \intersect \clfn(J)$.
    \item $I \join J = \ex( \clfn(I) \union \clfn(J) )$, and $\clfn(I \join J) \subseteq \clfn(I) \union \clfn(J)$.
  \end{itemize}
\end{theorem}

\begin{proof}
  The closure function $\clfn : \Ext(M) \to \K_M$ is a poset isomorphism, hence the poset structure of $\Ext(M)$ is equivalent to that of the lattice $\K_M$ of $\clfn$-closed sets under inclusion.  Meet-distributivity follows from Proposition \ref{prop:meet-dist-lat}.
  The descriptions of the meet and join operators follow directly from the properties of $\K_M$ as a convex geometry, noting that Proposition \ref{prop:convex-geometries}, Part \ref{p:ex-injective} is used to simplify the join operation.
\end{proof}

The external order additionally behaves well with respect to other natural orders.

\begin{proposition}
  \label{prop:ext-order-others}
  Let $M$ be an ordered matroid.  For every $I, J \in \indep(M)$:
  \begin{itemize}
    \item If $I \subseteq J$, then $I \leqext J$.
    \item If $I \leqext J$, then $\cl(I) \subseteq \cl(J)$.
    \item If $I \leqext J$, then $I \leq_{\lex} J$ in $\leq^*$-lexicographic order.
  \end{itemize}
\end{proposition}

\begin{proof}
  The first two parts are immediate by the increasing property of closure functions, and the fact that $A$ spans $\clfn(A)$ for every set $A$.

  For the last part, suppose $I \leqext J$.  Since $J = \ex(\clfn(J))$, $J$~is the $\leq$-lexicographically maximal spanning independent subset of $\clfn(J)$, and in particular can be formed by taking $\leq^*$-greedy extensions of independent sets in $\clfn(J)$.  Since $I \subseteq \clfn(I) \subseteq \clfn(J)$ is an independent subset of $\clfn(J)$, this implies that $J$ is $\leq^*$-lexicographically at least as large as $I$.
\end{proof}

In \cite{las_vergnas_active_2001}, Las Vergnas defines the \emph{external/internal order} on the bases of an ordered matroid as a suitable join of the external and internal active orders.  It would be interesting if a natural generalization of the external/internal order can be defined which incorporates the structure of $\Ext(M)$ and the dual internal order on coindependent sets.  A fundamental difficulty in producing such a construction is the fact that matroid duality and the notions of duality most suitable for convex geometries are, while related, not entirely compatible.


A direct consequence of the definition of the external order is that it is consistent with the structure of the \emph{broken circuit complex}, a construction in topological combinatorics which has been used to study important combinatorial and homological properties of matroids.  Brylawski \cite{brylawski_broken-circuit_1977} gives an overview.  A \emph{broken circuit} of an ordered matroid $M$ is a set of the form $C \setminus \min(C)$ for $C$ a circuit of $M$, and the broken circuit complex of $M$ is defined as the collection of sets containing no broken circuit.  In particular, these are the sets $I \in \indep$ with $\clExt(I) = I$, whence we conclude the following.
\begin{proposition}
  \label{prop:bc-complex-embedding}
  Let $M$ be an ordered matroid, and let $BC(M)$ be its broken circuit complex.  Then $(BC(M), \subseteq)$ is a subposet of $(\indep(M), \leqext)$ forming a lower order ideal, and the facets of $BC(M)$ are the $\leqext$-minimal bases of $M$.
\end{proposition}

We now prove Theorem \ref{thm:crapo-type-partition}, which gives an application in the direction of topological combinatorics.
Recall that for an ordered matroid $M$, a classical result of Crapo gives a partition of the Boolean lattice $2^{E(M)}$ into the Boolean subintervals $[\IP(B), B \union \EA(B)]$ for bases $B$, where $\IP(B)$ denotes the set of \emph{internally passive} elements of $B$.\footnote{See for instance \cite{bjorner_homology_1992}, Proposition~7.3.6.}
By specializing a similar partition result in the theory of convex geometries to $\K(M)$, we obtain a refinement of the Crapo partition.

\begin{customthm}{\ref{thm:crapo-type-partition}}
  Let $M$ be an ordered matroid.  Then the intervals
  \[
    [I, I \union \EA(I)], \hspace{2mm} I \in \indep(M)
  \]
  form a partition of the Boolean lattice $2^{E(M)}$, and this partition is a refinement of the classical matroid partition of Crapo.
\end{customthm}

\begin{proof}
  Applying Proposition \ref{prop:conv-geom-additional}, Part \ref{p:cg-crapo-partition} to $\K(M)$ gives that the intervals $[I, I \union \EA(I)]$, $I \in \indep(M)$, form a partition of $2^{E(M)}$.
  To see that the partition is a refinement of the partition of Crapo, note that if $I$ is in $[\IP(B), B \union \EA(B)]$ for some $B \in \bases(M)$, then $I \union \EA(I)$ is as well by the closure function properties of $\clExt$.
\end{proof}

\section{Characterizing the External Order}
\label{sec:ext-ord-char}

We turn now to characterizing the meet-distributive lattices arising from the external order of an ordered matroid.  The characterization incorporates two main ideas.

First, we define \textit{matroidal\/} meet-distributive lattices by introducing a lattice property which is equivalent to the extreme point sets being the independent sets of a matroid.  Second, we will show that \textit{supersolvability\/} ensures a type of order consistency needed for such a convex geometry to be induced by a total order on the ground set.

The following result compares matroid closure functions to other closure functions sharing the same collection of independent sets.

\begin{lemma}
  \label{lem:matroid-closures-big}
  Let $M$ be a matroid, and let $\clfn$ be a closure function with independent sets $\indep(M)$.  Then for $A \subseteq E$, $\clfn(A) \subseteq \cl(A)$.
\end{lemma}

\begin{proof}
  First let $I \in \indep(M)$, and suppose there is an $x \in \clfn(I) \setminus \cl(I)$.  Since $x \notin \cl(I)$, the set $J = I \union x$ is independent, and we have $I \subseteq J \subseteq \clfn(I)$.  In particular,
  \[
    \clfn(I) \subseteq \clfn(J) \subseteq \clfn(\clfn(I)) = \clfn(I),
  \]
  and we have $\clfn(I) = \clfn(J)$.  But since $J$ is independent, $x$ is an extreme point of $J$, hence $x \notin \clfn(J \setminus x) = \clfn(I)$, a contradiction.  We conclude $\clfn(I) \subseteq \cl(I)$ for $I$ independent.

  For the general case, recall that the closure of a set $A$ in a closure system is given by the intersection of the closed sets containing $A$.  Thus it is sufficient to prove that the flats of $M$ are closed with respect to $\clfn$.

  If $F$ is a flat of $M$, let $\beta \subseteq F$ be minimal with $\clfn(\beta) = \clfn(F)$.  For any $b \in \beta$, we have $\clfn(\beta \setminus b) \neq \clfn(\beta) = \clfn(F)$ by minimality of $\beta$, so $b \notin \clfn(\beta \setminus b)$, and $b$ is an extreme point of $\beta$ with respect to $\clfn$.

  In particular, $\ex(\beta) = \beta$, so $\beta$ is independent for $\clfn$, and thus $\beta \in \indep(M)$.  Then by the previous argument for independent sets,
  \[
    \clfn(F) = \clfn(\beta) \subseteq \cl(\beta) \subseteq \cl(F) = F \subseteq \clfn(F).
  \]
  Thus $F$ is closed with respect to $\clfn$.
\end{proof}

Before proceeding, we will need a few additional definitions.  If $\K$ is a closure system, we call $\K$ \defn{matroidal} if $\indep(\K) = \indep(M)$ for some matroid $M$.  If $L$ is a finite lattice, the \defn{covering rank function} $\rcov$ of $L$ is defined by
\[
  \rcov(x) \defeq \ncard{\set{y \in L}{x \text{ covers } y \text{ in } L}}.
\]
Note that if $L$ is the meet-distributive lattice of convex sets of a convex geometry $\K$, then the sets covered by a convex set $K$ in $L$ correspond with the extreme points of $K$, hence $\rcov(K) = \card{\ex(K)}$.
Additionally, if $\rho: L \to \RR$, then $\rho$ is called \defn{submodular} if for $x, y \in L$, $\rho$ satisfies the submodular inequality
\[
  \rho(x \join y) + \rho(x \meet y) \leq \rho(x) + \rho(y).
\]

We can now prove the following lattice-theoretic characterization of matroidal convex geometries.

\begin{theorem}
  \label{thm:matroidal-rk-cov-fn}
  Let $(E, \K)$ be a convex geometry with meet-distributive lattice $L$ of convex sets.  Then $\K$ is matroidal if and only if the rank covering function of $L$ is increasing and submodular.
\end{theorem}

\begin{proof}
  Suppose first that the rank covering function $\rcov$ of $L$ is increasing and submodular.  For $A \subseteq E$ let
  \[
    r(A) \defeq \max \set{\card{I}}{I \in \indep(\K), I \subseteq A}.
  \]
  We will show that $r$ is a matroid rank function with corresponding independent sets $\indep(\K)$.

  It follows immediately from the definition of $r$ that $0 \leq r(A) \leq \card{A}$ for $A \subseteq E$, and that $r(A) \leq r(B)$ for $A \subseteq B \subseteq E$.  It thus remains to prove that $r$ is submodular on $2^E$.  We first show that $r(A) = \card{\ex(A)}$ for $A \subseteq E$, and thus that $r(A) = \rcov(\clfn(A))$.

  Let $A \subseteq E$ and let $I$ be an independent subset of $A$.  Since $\rcov$ is increasing on $L$ and $\clfn(I) \subseteq \clfn(A)$, we have that $\card{I} = \rcov(\clfn(I)) \leq \rcov(\clfn(A)) = \card{\ex(A)}$.
  Thus $\card{\ex(A)}$ is of maximal cardinality among independent subsets of $A$, so $r(A) = \card{\ex(A)}$.

  Now let $A, B \subseteq E$, and note that $\clfn(A \intersect B) \subseteq \clfn(A) \meet \clfn(B)$ and $\clfn(A \union B) \subseteq \clfn(A) \join \clfn(B)$.  Applying the submodular inequality for $\rcov$, we have
  \begin{align*}
    r(A \intersect B) + r(A \union B)
    & = \rcov(\clfn(A \intersect B)) + \rcov(\clfn(A \union B)) \\
    & \leq \rcov(\clfn(A) \meet \clfn(B)) + \rcov(\clfn(A) \join \clfn(B)) \\
    & \leq \rcov(\clfn(A)) + \rcov(\clfn(B)) \\
    & = r(A) + r(B).
  \end{align*}

  Thus $r$ also satisfies the submodular inequality.  Finally, note that $A \subseteq E$ satisfies $r(A) = \card{A}$ if and only if $A \in \indep(\K)$, so the matroid defined by rank function $r$ has independent sets $\indep(\K)$.


  For the converse, suppose $\K$ is matroidal.  Let $M$ be the matroid with $\indep(\K) = \indep(M)$, and let $r$ be its rank function.  We first show that $r$ is equal to $\rcov$ for convex sets.
  If $K \in \K$, we see by Lemma \ref{lem:matroid-closures-big} that $\cl(\ex(K)) \supseteq \clfn(\ex(K)) = \clfn(K) \supseteq K$, and thus $\ex(K)$ spans $K$ in $M$.
  Since $\ex(K)$ is additionally independent in $M$, this implies that $\card{\ex(K)}$ gives the rank of $K$ in $M$, hence $\rcov(K) = r(K)$.

  Now let $K, K' \in \K$.  Applying equality of $r$ and $\rcov$ for convex sets, we additionally have $\rcov(K \meet K') = r(K \intersect K')$ and $\rcov(K \join K') = r(K \union K')$, the latter following from Proposition \ref{prop:convex-geometries}, Part \ref{p:ex-of-closure}.
  We can then conclude that $\rcov$ is increasing and submodular on $L$ by the corresponding properties of the matroid rank function~$r$.
\end{proof}

By this result, we see that the external order for an ordered matroid has increasing, submodular covering rank function.  On the other hand, not every matroidal convex geometry comes about in this way, as the following example demonstrates.

\begin{example}
  \label{ex:matroidal-not-ext-order}
  Consider the convex geometry on ground set $E = \{a, b, c, d\}$ whose convex sets are $\K =  \{\emptyset, a, b, c, d, ab, ac, bd, abc, abd, abcd\}$.  The Hasse diagram for the corresponding meet-distributive lattice appears in Figure \ref{fig:eo-mmd-counterexample}.

  In particular, the collection of independent sets of this convex geometry is the uniform matroid $U^2_4$ of rank 2 on 4 elements.  Suppose this were the external order with respect to some total ordering $<$ on $E$.  In this case, we observe that
  \begin{itemize}
    \item $a$ is active with respect to $I = bc$, so $a$ is smallest in the unique circuit $abc$ in $I \union a$
    \item $b$ is active with respect to $J = ad$, so $b$ is smallest in the unique circuit $abd$ in $J \union b$
  \end{itemize}
  But this implies that both $a < b$ and $b < a$, a contradiction.  Thus this lattice cannot be realized as the external order of $U^2_4$ with respect to any total ordering on $E$.
\end{example}

\begin{figure}[ht]
  \centering
  \begin{minipage}{.4\textwidth}
    \centering
    \begin{tikzpicture}[yscale=1.25, xscale=0.75]
      \tikzstyle{feas set style}=[inner sep=1mm, shape=rectangle]
      \tikzstyle{edge label style}=[midway, inner sep=0.5mm, shape=rectangle, fill=white]
      \newcommand*{\leftpad}{0mm}

      \node[feas set style] (pcd) at ( 0,4) {abcd};
      \node[feas set style] (pbc) at (-1,3) {abc};
      \node[feas set style] (pad) at ( 1,3) {abd};
      \node[feas set style] (pac) at (-2,2) {ac};
      \node[feas set style] (pab) at ( 0,2) {ab};
      \node[feas set style] (pbd) at ( 2,2) {bd};
      \node[feas set style] (ppc) at (-3,1) {c};
      \node[feas set style] (ppa) at (-1,1) {a};
      \node[feas set style] (ppb) at ( 1,1) {b};
      \node[feas set style] (ppd) at ( 3,1) {d};
      \node[feas set style] (ppp) at ( 0,0) {$\emptyset$};

      \draw (pcd) -- (pbc) node [edge label style] {\tiny d};
      \draw (pcd) -- (pad) node [edge label style] {\tiny c};
      \draw (pbc) -- (pac) node [edge label style] {\tiny b};
      \draw (pbc) -- (pab) node [edge label style] {\tiny c};
      \draw (pad) -- (pab) node [edge label style] {\tiny d};
      \draw (pad) -- (pbd) node [edge label style] {\tiny a};
      \draw (pac) -- (ppc) node [edge label style] {\tiny a};
      \draw (pac) -- (ppa) node [edge label style] {\tiny c};
      \draw (pab) -- (ppa) node [edge label style] {\tiny b};
      \draw (pab) -- (ppb) node [edge label style] {\tiny a};
      \draw (pbd) -- (ppb) node [edge label style] {\tiny d};
      \draw (pbd) -- (ppd) node [edge label style] {\tiny b};
      \draw (ppc) -- (ppp) node [edge label style] {\tiny c};
      \draw (ppa) -- (ppp) node [edge label style] {\tiny a};
      \draw (ppb) -- (ppp) node [edge label style] {\tiny b};
      \draw (ppd) -- (ppp) node [edge label style] {\tiny d};
    \end{tikzpicture}

  \end{minipage}
  \begin{minipage}{.4\textwidth}
    \centering
    \begin{tikzpicture}[yscale=1.25, xscale=0.75]
      \tikzstyle{feas set style}=[inner sep=1mm, shape=rectangle]
      \tikzstyle{edge label style}=[midway, inner sep=0.5mm, shape=rectangle, fill=white]
      \newcommand*{\leftpad}{0mm}
      \newcommand*{\highlightcolor}{gray!30!white}

      \node[feas set style] (pcd) at ( 0,4) {cd};
      \node[feas set style] (pbc) at (-1,3) {bc};
      \node[feas set style] (pad) at ( 1,3) {ad};
      \node[feas set style] (pac) at (-2,2) {ac};
      \node[feas set style] (pab) at ( 0,2) {ab};
      \node[feas set style] (pbd) at ( 2,2) {bd};
      \node[feas set style] (ppc) at (-3,1) {c};
      \node[feas set style] (ppa) at (-1,1) {a};
      \node[feas set style] (ppb) at ( 1,1) {b};
      \node[feas set style] (ppd) at ( 3,1) {d};
      \node[feas set style] (ppp) at ( 0,0) {$\emptyset$};

      \draw (pcd) -- (pbc);
      \draw (pcd) -- (pad);
      \draw (pbc) -- (pac);
      \draw (pbc) -- (pab);
      \draw (pad) -- (pab);
      \draw (pad) -- (pbd);
      \draw (pac) -- (ppc);
      \draw (pac) -- (ppa);
      \draw (pab) -- (ppa);
      \draw (pab) -- (ppb);
      \draw (pbd) -- (ppb);
      \draw (pbd) -- (ppd);
      \draw (ppc) -- (ppp);
      \draw (ppa) -- (ppp);
      \draw (ppb) -- (ppp);
      \draw (ppd) -- (ppp);
    \end{tikzpicture}
  \end{minipage}

  \caption{Convex sets of $\K$ with edge labels, and corresponding independent sets}
  \label{fig:eo-mmd-counterexample}
\end{figure}


To bridge the gap between matroidal convex geometries and the external order, we will need one more key notion, a characterization of supersolvable join-distributive lattices in terms of their corresponding antimatroid feasible sets, proven by Armstrong \cite{armstrong_sorting_2009}, Section 2.
We will not need additional background on supersolvable lattices beyond this characterization, but we refer the reader to \cite{stanley_supersolvable_1972} for more details.

\begin{definition}
  A set system $(E, \F)$ is called a \defn{supersolvable antimatroid} with respect to a total ordering $\leq$ on $E$ if:
  \begin{itemize}
    \item $\emptyset \in \F$.
    \item For every $A, B \in \F$, if $B \nsubseteq A$ and $x = \min_{\leq}(B \setminus A)$, then $A \union x \in \F$.
  \end{itemize}
\end{definition}

It can be seen that a supersolvable antimatroid is in particular an antimatroid.  In \cite{armstrong_sorting_2009} Theorem 2.13, Armstrong relates this property to lattice supersolvability as follows.

\begin{proposition}
  \label{prop:ss-antimat-ss-lat}
  Let $(E, \F)$ be an antimatroid with join-distributive lattice $L$ of feasible sets.  Then $L$ is a supersolvable lattice if and only if there exists a total ordering on $E$ with respect to which $\F$ is a supersolvable antimatroid.
\end{proposition}

We give an additional equivalent characterization for supersolvable antimatroids in terms of antimatroid rooted circuits.

\begin{theorem}
  \label{thm:ss-antimat-circuit-order}
  Let $(E, \F)$ be an antimatroid, and let $\leq$ be a total ordering on $E$.  Then $\F$ is supersolvable with respect to $\leq$ if and only if for every rooted circuit $(C, r)$ of $\F$, the $\leq$-maximal element of $C$ is $r$.
\end{theorem}

\begin{proof}
  Suppose first that any rooted circuit $(C, r)$ of $\F$ has root $r = \max(C)$.  Let $A, B \in \F$ with $B \nsubseteq A$, and let $x = \min(B \setminus A)$.  Suppose $A \union x \notin \F$.
  Then by Proposition \ref{prop:rt-circuits-characterize}, there exists a rooted circuit $(C, r)$ with $C \intersect (A \union x) = r$.
  Since $A$ is feasible, $C \intersect A \neq r$, so this intersection must be empty, and in particular, the root of $C$ is $x$.

  Since $B$ is feasible, $C \intersect B \neq x$, so $C \intersect B$ contains an element $y \neq x$.  By assumption, $x = \max(C)$, so $y < x$.  On the other hand, since $x = \min(B \setminus A)$, we see that $y \in A$.  But this implies that $y \in C \intersect A$, contradicting $C \intersect A = \emptyset$.  We conclude that $A \union x \in \F$, and hence that $\F$ is supersolvable with respect to $\leq$.

  For the converse, suppose $\F$ is supersolvable with respect to $\leq$, and let $(C, r)$ be a rooted circuit of $\F$ with $x = \max(C)$.  Suppose for a contradiction that $x \neq r$.
  Let $A = \Union \set{F \in \F}{F \intersect C = \emptyset}$, and let $B \in \F$ such that $B \intersect C = \{x, r\}$, which exists by Proposition \ref{prop:rooted-circuits-def}.

  Let $y = \min(B \setminus A)$, so that by supersolvability, $A \union y \in \F$.  If $y < r$, then $y \notin C$ since $r$ is the smallest element of $B \intersect C$.  In particular, this then contradicts maximality of $A$ among feasible sets avoiding $C$, since $C \intersect (A \union y) = \emptyset$.    Thus we must have $y = r$.  But then $C \intersect (A \union r) = r$, which implies $A \union r \notin \F$, a contradiction.
  We conclude that the root of $C$ must coincide with its $\leq$-maximal element, and this holds for all rooted circuits of $\F$.
\end{proof}

The families of rooted sets which are the rooted circuits of an antimatroid can be characterized axiomatically as described in \cite{bjorner_introduction_1992}, Theorem 8.7.12.  In this reference, the authors note that the axioms bear a curious resemblance to the circuit axioms for matroids.  The following lemma gives an explanation for this resemblance.

\begin{lemma}
  \label{lem:ext-antimat-circuit-roots}
  Let $M$ be an ordered matroid, and let $\F$ be the antimatroid with feasible sets $\set{E(M) \setminus K}{K \in \K_M}$.  Then the rooted circuits of $\F$ are given by
  \[
    \circuits = \set{(C, r)}{C \in \circuits(M), r = \min(C)}.
  \]
\end{lemma}

\begin{proof}
  The (non-rooted) circuits of $\F$ are its minimal dependent sets, which in particular are given by $\circuits(M)$ since $\F$ has independent sets $\indep(M)$.

  Let $C \in \circuits(M)$, and let $x = \min(C)$.  Suppose that there is a feasible set $F \in \F$ such that $F \intersect C = \{x\}$.  If $F = E(M) \setminus K$ for $K \in \K_M$, then this implies that $x \notin K$, but $C \subseteq K \union x$.
  Since $x$ is minimal in $C$, this implies $x \in \EA(K) \subseteq \clfn(K) = K$, hence $x \in K$, a contradiction.
  Thus no feasible set intersects $C$ in $\{x\}$, and we see by Proposition \ref{prop:rooted-circuits-def} that $x$ is the circuit root of $C$ in $\F$.
\end{proof}

We can now give a proof of Theorem \ref{thm:ext-order-char}, characterizing the lattices corresponding to $\Ext(M)$ for some ordered matroid $M$.

\begin{customthm}{\ref{thm:ext-order-char}}
  A finite lattice $L$ is isomorphic to the external order $\leqext$ of an ordered matroid if and only if it is meet-distributive, supersolvable, and has increasing and submodular covering rank function.
\end{customthm}

\begin{proof}
  First suppose that $L$ is isomorphic to the external order of an ordered matroid $M$.
  By Theorem \ref{thm:ext-order-lattice}, $L$ is meet-distributive, and if $\K_M$ is the corresponding convex geometry, then by Corollary \ref{cor:ext-order-structure}, the independent sets of $\K_M$ are $\indep(M)$, so $\K_M$ is a matroidal closure system.
  By Theorem \ref{thm:matroidal-rk-cov-fn}, the covering rank function of $L$ is thus increasing and submodular.

  Let $\F_M$ be the complementary antimatroid $\set{E(M) \setminus K}{K \in \K_M}$.
  By Lemma \ref{lem:ext-antimat-circuit-roots}, the rooted circuits of $\F_M$ are given by $\set{(C, r)}{C \in \circuits(M), r = \min(C)}$,
  so by Theorem \ref{thm:ss-antimat-circuit-order}, $\F_M$ is a supersolvable antimatroid with respect to the reverse order $\leq^*$.
  This implies the lattice $L^*$ of $\F_M$ under set inclusion is supersolvable by Proposition \ref{prop:ss-antimat-ss-lat}, so since supersolvability is preserved by reversing a lattice, we conclude that $L$ is supersolvable.

  Now suppose $L$ is a finite lattice which is meet-distributive, supersolvable, and has increasing and submodular covering rank function.
  If $\K$ is the convex geometry associated with $L$ as a meet-distributive lattice, then $\indep(\K)$ is given by $\indep(M)$ for a matroid $M$ by Theorem \ref{thm:matroidal-rk-cov-fn}.  Let $\F$ be the complementary antimatroid $\set{E(M) \setminus K}{K \in \K}$.  The (non-rooted) circuits of $\F$ are the minimal dependent sets of $\F$, and thus coincide with the matroid circuits of $M$.

  Since the lattice $L^*$ of feasible sets of $\F$ is supersolvable, $\F$ is a supersolvable antimatroid with respect to a total ordering $\leq$ on $E(M)$, and by Theorem \ref{thm:ss-antimat-circuit-order}, the root of each circuit of $\F$ is given by its $\leq$-maximum, or equivalently, its $\leq^*$-minimum.  Since an antimatroid is determined by its rooted circuits, this implies that $\F$ is equal to the complementary antimatroid of the convex geometry $\K_M$ for $M$ ordered by $\leq^*$.
  Thus $\K = \K_M$, and we conclude that $L$ is isomorphic to the external order of $(M, \leq^*)$.
\end{proof}

\section{Future Work}
\label{sec:future-work}

The construction of the external order $\Ext(M)$ and its associated convex geometry $\K_M$ naturally gives rise to several potential directions for future work.

\subsection{Convex Geometries}
\label{subsec:future-work-conv-geoms}

The construction of the external order described here is given in the setting of \emph{ordered matroids}.  However, there are many variants and generalizations of matroids that exhibit similar structure, and it would be interesting to consider whether similar constructions to the external order can be carried out in such related contexts.

\begin{problem}
  Define natural convex geometry structures for additional classes of objects related to matroids.
\end{problem}

For example, several of the \emph{quasi-matroidal classes} of Samper \cite{samper_quasi-matroidal_2020,samper_relaxations_2016} admit orders corresponding to the classical active orders on matroid bases.
Similarly, in \cite{las_vergnas_active_2001} Section 7, Las Vergnas defines active orders corresponding to \emph{matroid perspectives}.
The theory of \emph{graph fourientations} generalizes the notion of orientations on graphs, and in particular admits a concept of activity related to the underlying graph Tutte polynomial; see Backman and Hopkins \cite{backman_fourientations_2017} Section 3.5 for details.

A more concrete question is to determine the values of various parameters associated with convex sets in the case of the convex geometry $\K_M$.  The \emph{Helly number} of a convex geometry is known to be the maximal size of an independent set (see for instance \cite{edelman_theory_1985} Theorem 4.6), and thus is given for $\K_M$ by the rank of $M$.  Some additional convexity parameters to determine are the Radon number, Carath\'eodory number, and convex dimension \cite{edelman_theory_1985,kay_axiomatic_1971}.

\begin{problem}
  For an ordered matroid $M$, find the Radon number, Carath\'eodory number, and convex dimension of the convex geometry $\K_M$.
\end{problem}

Recall from Section \ref{subsec:background-conv-geoms} that a convex geometry is called \emph{affine} if it arises from the convex hulls of a finite collection of points in Euclidean space.  The problem of classifying the affine convex geometries was posed in \cite{edelman_theory_1985}, and remains open with some partial results.  For an overview of current progress, see \cite{adaricheva_representation_2019}.

Restricting to the setting of the external order could potentially be more tractible than the general case.  We call an ordered matroid \defn{convex representable} if its external order defines an affine convex geometry.

\begin{problem}
  Classify the ordered matroids which are convex representable.
\end{problem}

\subsection{Matroid $h$-vectors}

Recall that the \emph{$f$-vector} of a matroid is the list $(f_0, f_1, \ldots, f_d)$ enumerating the independent sets of each size, and the \emph{$h$-vector} is an invertible transformation of the $f$-vector given by
\[
  h_i = \ncard{\set{B \in \bases(M)}{\card{\IP(B)} = i}}.
\]
The $h$-vector of a matroid $M$ is closely related to the matroid active orders: in the internal active order, the $h$-vector of $M$ can be read as the number of bases of a given rank.

A recent innovation in the study of matroid $h$-vectors is the class of \emph{internally perfect matroids} of Dall \cite{dall_internally_2017}.  These matroids are characterized in terms of certain conditions related to the internal active order, and this class of matroids is notably shown by Dall to satisfy the outstanding conjecture of Stanley that the $h$-vector of a matroid is a \emph{pure $O$-sequence}.  Studying the (dual) convexity theory of internally perfect matroids could improve our understanding of matroid $h$-vectors, and potentially shed light on the general case of Stanley's conjecture.

\subsection{Topological Combinatorics}
\label{subsec:open-top-combinatorics}

In the area of topological combinatorics, matroid activities and the active orders are richly connected with the theory of independence and broken circuit complexes, in particular through generating functions such as the shelling and Tutte polynomials.  Examining how $\Ext(M)$ relates to these classical constructions could yield some new insights.

As one example, for an ordered matroid $M$, define the \defn{external structure polynomial} $P_M$ of $M$ by
\[
  P_M(x, y, z) \defeq \sum_{I \in \indep(M)} x^{r(M) - \card{I}} y^{\card{\EA(I)}} z^{\card{\EA(B_I) \setminus \EA(I)}},
\]
where $B_I$ denotes the unique basis of $M$ such that $\IP(B_I) \subseteq I \subseteq B_I \union \EA(B_I)$.  Then $P_M$ can be seen to generalize the \emph{Tutte polynomial} of $M$ by $P_M(x, y, y) = T_M(x+1, y)$.  Another specialization of $P_M$ is given by
\[
  P_M(x, y, 1) = \sum_{F \in \flats(M)} x^{r(M) - r(F)} h_{(M|_F)^*}(y),
\]
where $h$ denotes the shelling polynomial of a matroid independence complex.
Both of these specializations of $P_M$ are in particular independent of the ordering of $M$.  However, it is not known whether $P_M$ itself is independent of the ordering of $M$, or if it admits a natural topological interpretation.

As a related problem, in \cite{ardila_topology_2016}, the authors explore connections between linear extensions of the active orders on matroid bases and shelling orders of several related shellable simplicial complexes.  It would be interesting if the linear extensions of the generalized external order could be viewed similarly.
\begin{problem}
  Determine whether the linear extensions of $\Ext(M)$ correspond with the shelling orders of an appropriate simplicial complex.
\end{problem}

\subsection*{Acknowledgments}

The author would like to thank Federico Ardila, Spencer Backman, Anders Bj\"orner, Maria Gillespie, Olga Holtz, and Jose Samper for their helpful input and guidance in the preparation of this manuscript.  The author additionally thanks the anonymous referee for valuable feedback, and in particular for the suggestion to present the results using the language of closure functions, which significantly streamlined the exposition.  The research leading to these results received funding from the National Science Foundation under agreement No.\ DMS-1303298.  Any opinions, findings and conclusions or recommendations expressed in this material are those of the author, and do not necessarily reflect the views of the National Science Foundation.



\begin{thebibliography}{99}

\bibitem{adaricheva_representation_2019}
K.~Adaricheva and M.~Bolat.
\newblock Representation of convex geometries by circles on the plane.
\newblock {\em Discrete Mathematics}, 342(3):726--746, Mar. 2019.

\bibitem{ardila_topology_2016}
F.~Ardila, F.~Castillo, and J.~A. Samper.
\newblock The topology of the external activity complex of a matroid.
\newblock {\em Electronic Journal of Combinatorics}, 23(3), 2016.

\bibitem{armstrong_sorting_2009}
D.~Armstrong.
\newblock The sorting order on a {Coxeter} group.
\newblock {\em Journal of Combinatorial Theory, Series A}, 116(8):1285--1305,
  Nov. 2009.

\bibitem{backman_fourientations_2017}
S.~Backman and S.~Hopkins.
\newblock Fourientations and the {Tutte} polynomial.
\newblock {\em Research in the Mathematical Sciences}, 4(1), Sept. 2017.

\bibitem{bari_chromatic_polynomials_1979}
R.~A. Bari.
\newblock Chromatic polynomials and the internal and external activities of
  tutte.
\newblock In {\em Graph theory and related topics (Proc.~Conf., Univ. Waterloo,
  Waterloo, Ont., 1977)}, pages 41 -- 52, New York-London, 1979. Academic
  Press.

\bibitem{bjorner_homology_preprint_1979}
A.~Bj\"orner.
\newblock Homology of matroids.
\newblock Preprint, Mittag--Leffler Institute, 1979.

\bibitem{bjorner_homology_1992}
A.~Bj\"orner.
\newblock The homology and shellability of matroids and geometric lattices.
\newblock In {\em Matroid {Applications}}, volume~40 of {\em Encyclopedia of
  {Mathematics} and {Its} {Applications}}. Cambridge University Press, 1992.

\bibitem{bjorner_introduction_1992}
A.~Bj\"orner and G.~M. Ziegler.
\newblock Introduction to greedoids.
\newblock In {\em Matroid {Applications}}, volume~40 of {\em Encyclopedia of
  Mathematics and Its Applications}. Cambridge University Press, 1992.

\bibitem{brylawski_broken-circuit_1977}
T.~Brylawski.
\newblock The {broken}-{circuit} {complex}.
\newblock {\em Transactions of the American Mathematical Society},
  234(2):417--433, 1977.

\bibitem{crapo_tutte_1969}
H.~H. Crapo.
\newblock The {Tutte} polynomial.
\newblock {\em Aequationes Mathematicae}, 3:211--229, Oct. 1969.

\bibitem{dall_internally_2017}
A.~Dall.
\newblock Internally {perfect} {matroids}.
\newblock {\em Electronic Journal of Combinatorics}, 24(2), 2017.

\bibitem{dawson_collection_1984}
J.~E. Dawson.
\newblock A collection of sets related to the tutte polynomial of a matroid.
\newblock In K.~M. Koh and H.~P. Yap, editors, {\em Graph {Theory} {Singapore}
  1983}, Lecture {Notes} in {Mathematics}, pages 193--204. Springer Berlin
  Heidelberg, 1984.

\bibitem{dietrich_matroids_1989}
B.~L. Dietrich.
\newblock Matroids and antimatroids --- a survey.
\newblock {\em Discrete Mathematics}, 78(3):223--237, Jan. 1989.

\bibitem{edelman_meet-distributive_1980}
P.~H. Edelman.
\newblock Meet-distributive lattices and the anti-exchange closure.
\newblock {\em Algebra Universalis}, 10(1):290--299, Dec. 1980.

\bibitem{edelman_lattice_1982}
P.~H. Edelman.
\newblock The lattice of convex sets of an oriented matroid.
\newblock {\em Journal of Combinatorial Theory, Series B}, 33(3):239--244, Dec.
  1982.

\bibitem{edelman_theory_1985}
P.~H. Edelman and R.~E. Jamison.
\newblock The theory of convex geometries.
\newblock {\em Geometriae Dedicata}, 1985.

\bibitem{gillespie_generalized_2018}
B.~Gillespie.
\newblock {\em The {generalized} {external} {order}, and {applications} to
  {zonotopal} {algebra}}.
\newblock PhD thesis, UC Berkeley, 2018.

\bibitem{gordon_generalized_1990}
G.~Gordon and L.~Traldi.
\newblock Generalized {activities} and the {Tutte} {polynomial}.
\newblock {\em Discrete Mathematics}, 85, 1990.

\bibitem{holtz_zonotopal_2011}
O.~Holtz and A.~Ron.
\newblock Zonotopal algebra.
\newblock {\em Advances in Mathematics}, 227(2):847--894, June 2011.

\bibitem{jamison_perspective_convexity_1982}
R.~E. Jamison.
\newblock A perspective on abstract convexity: Classifying alignments by
  varieties.
\newblock {\em Convexity and Related Combinatorial Geometry}, pages 113--150,
  1982.

\bibitem{kay_axiomatic_1971}
D.~C. Kay and E.~W. Womble.
\newblock Axiomatic convexity theory and relationships between the
  {Carath{\'e}odory}, {Helly}, and {Radon} numbers.
\newblock {\em Pacific Journal of Mathematics}, 38(2):471--485, 1971.

\bibitem{las_vergnas_convexity_1980}
M.~Las~Vergnas.
\newblock Convexity in oriented matroids.
\newblock {\em Journal of Combinatorial Theory, Series B}, 29(2):231--243, Oct.
  1980.

\bibitem{las_vergnas_active_2001}
M.~Las~Vergnas.
\newblock Active orders for matroid bases.
\newblock {\em European Journal of Combinatorics}, 22(5):709--721, July 2001.

\bibitem{lenz_zonotopal_2016}
M.~Lenz.
\newblock Zonotopal algebra and forward exchange matroids.
\newblock {\em Advances in Mathematics}, 294:819--852, May 2016.

\bibitem{monjardet_duality_2001}
B.~Monjardet and V.~Raderanirina.
\newblock The duality between the anti-exchange closure operators and the path
  independent choice operators on a finite set.
\newblock {\em Mathematical Social Sciences}, 41(2):131--150, Mar. 2001.

\bibitem{oxley_matroid_2011}
J.~G. Oxley.
\newblock {\em Matroid Theory}.
\newblock Oxford University Press, New York, 2nd edition, 2011.

\bibitem{samper_quasi-matroidal_2020}
J.~Samper.
\newblock Quasi-matroidal classes of ordered simplicial complexes.
\newblock {\em Journal of Combinatorial Theory, Series A}, 175, 2020.

\bibitem{samper_relaxations_2016}
J.~A. Samper.
\newblock Relaxations of the matroid axioms {I}: {independence}, {exchange} and
  {circuits}.
\newblock {\em Proceedings of the 28th International Conference on Formal Power
  Series and Algebraic Combinatorics}, 2016.

\bibitem{stanley_supersolvable_1972}
R.~P. Stanley.
\newblock Supersolvable lattices.
\newblock {\em Algebra Universalis}, 2(1):197, Dec. 1972.

\bibitem{stanley_enumerative_2011}
R.~P. Stanley.
\newblock {\em Enumerative Combinatorics: Volume 1}.
\newblock Cambridge University Press, New York, NY, 2nd edition, 2011.

\end{thebibliography}
\end{document}